\newtheorem{thm}{Theorem}[section]
\newtheorem{lem}[thm]{Lemma}
\newtheorem{defn}[thm]{Definition}
\newtheorem{rem}[thm]{Remark}
\newtheorem{prop}[thm]{Proposition}
\newtheorem{exm}[thm]{Example}
\journal{XXXXXXX}
\begin{document}
\date{~}
\begin{frontmatter}

\author{J. Mahanta 
}
 \ead{jm$\_$nerist@yahoo.in}
 \author{P. K. Das 
}
\ead{pkd$\_$ma@yahoo.com}
\address{Department of Mathematics, NERIST, Nirjuli, Arunachal Pradesh, 791 109, INDIA.\\ }

\title{On fuzzy weakly-closed sets}

\begin{abstract} 

A new class of fuzzy closed sets, namely fuzzy weakly closed set in a fuzzy topological space is introduced and it is established that this class of fuzzy closed sets lies between fuzzy closed sets and fuzzy generalized closed sets. Alongwith the study of fundamental results of such closed sets, we define and characterize fuzzy weakly compact space and fuzzy weakly closed space.
\end{abstract}

\begin{keyword} 
Fuzzy weakly-closed set \sep fuzzy weakly-closed space \sep fuzzy weakly-compactness. 
\MSC 54A40, 54D30.
\end{keyword}
\end{frontmatter}

\section{Introduction and Preliminaries}
 
It is known that the different notions of topology are defined in terms of the open sets, which can also be replaced by either closed sets, or closure or interior, or neighborhood system. Among these again open sets and closed sets are very closely linked and are indispensable in the study of the topological notions. This has led to several generalizations of closed sets (\cite{SPA}, \cite{SSB}, \cite{BY}, \cite{NL}, \cite{FA}, \cite{HM}, \cite{AS}, \cite{PA} etc.) which in turn have provided several new properties of topological spaces.\\

 In this study, we consider fuzzy topological space (FTS) in Chang's sense \cite{CA}, by $xcl(A)$ we mean $x-$closure of $A$ where $x \in \{\alpha, s, p, sp, g^*s\}$ . We give below a few definitions and results which are required for our study and refer the readers to \cite{PA1} for the terms not defined here.

\begin{defn}
A family $\{A_\lambda~|~ \lambda \in \Lambda\}$ of fuzzy subsets of a set $X$ has the finite intersection property (FIP) if the intersection of the members of each finite subfamily of $\{A_\lambda~|~ \lambda \in \Lambda\}$ is nonempty.
\end{defn}

\begin{defn} \cite{SG}
A collection of fuzzy subsets $\Gamma$ of an FTS $X$ is said to form a fuzzy filterbase iff for every collection $\{A_j~|~ j = 1,2, . . . , n\}, ~ \overset{n}{\underset{j=1}{\bigwedge}} A_j\neq 0_X$.
\end{defn}

\begin{defn} 
A fuzzy set $A$ of an FTS $(X, \tau)$ is said to be:\\
\begin{enumerate}
\item \cite{w4} fuzzy $\alpha-$closed (simply F$\alpha-$closed) if $cl(Int(cl(A))) \leq A$.
\item \cite{w4} fuzzy pre-closed set (simply Fp-closed) if  $cl(Int(A)) \leq A$. 
\item \cite{M} fuzzy semi-closed set (simply Fs-closed) if  $Int(cl(A)) \leq A$. 
\item \cite{N} Generalized closed (Fg-closed) fuzzy set iff $cl(A) \leq U$  whenever $A \leq U$ and $U \in \tau$.
\item \cite{w8} Semi-generalized closed fuzzy set  (Fsg-closed) if $scl(A) \leq U $ whenever $A \leq U$ and $U$ is a   semi open fuzzy set.
\item \cite{w9} $\alpha-$generalized closed (F$\alpha$g-closed) fuzzy set if $\alpha cl(A) \leq U$  whenever $A \leq U$ and $U \in \tau$.
\item \cite{w9} Generalized $\alpha-$closed fuzzy set (Fg$\alpha$-closed) if $\alpha cl(A) \leq U$ whenever $A \leq U$ and $U$ is F$\alpha-$open.
\item \cite{w11} Semi-pre-generalized fuzzy set (Fspg-closed) if $spcl(A) \leq U$ whenever $A \leq U$ and $U$ is fuzzy semi-pre-open. 
\item \cite{w5} Pre-generalized fuzzy set (Fpg-closed) if $pcl(A) \leq U $ whenever $A \leq U$ and $U$ is fuzzy pre-open. 
\item \cite{K} Generalized-semi closed (Fgs-closed) fuzzy set if $scl(A) \leq U $ whenever $A \leq U$ and $U \in \tau$. 
\item Generalized-pre closed (Fgp-closed) fuzzy set if $pcl(A) \leq U $ whenever $A \leq U$ and $U \in \tau$. 
\item Generalized-semi-pre closed (Fgsp-closed) fuzzy set if $spcl(A) \leq U $ whenever $A \leq U$ and $U \in \tau$.
\item  \cite{XX} g*s-closed fuzzy set iff $scl(A) \leq U $ whenever $A \leq U$ and $U$ is Fgs-open fuzzy set. 
\end{enumerate}
\end{defn} 

The complements of the above closed sets are known as the respective open sets. 

\begin{prop}
 In an FTS $(X, \tau)$ the following results hold:
\begin{enumerate}
 \item \cite{N} Every fuzzy closed set is Fg-closed.
\item \cite{w8} Every Fs-closed set is Fsg-closed.
\item \cite{w9} Every F$\alpha$-closed set is F$\alpha$g-closed.
\item \cite{w5} Every Fp-closed set is Fpg-closed.
\item \cite{w11} Every Fsp-closed set is Fspg-closed.
\item \cite{XX} Every Fs-closed set is Fg*s-closed.
\item \cite{XX} Every g*s-closed fuzzy set is Fsg-closed.
\end{enumerate}
\end{prop}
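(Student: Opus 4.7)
The plan is to observe that each of the seven inclusions is a direct consequence of the defining inequalities, using a single uniform pattern for (i)--(vi) and a short additional step for (vii).

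First I would record the general principle: for each of the closure operators $cl$, $\alpha cl$, $scl$, $pcl$, $spcl$, the value on a fuzzy set $A$ is by definition the smallest $x$-closed fuzzy set containing $A$, and the collection of $x$-closed sets is stable under arbitrary intersections (a routine consequence of the monotonicity of $cl$ and $Int$). In particular, whenever $A$ is already $x$-closed one has $xcl(A)=A$.

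Next I would apply this principle item by item. For (i), if $A$ is closed and $A\le U$ with $U\in\tau$, then $cl(A)=A\le U$, so $A$ is Fg-closed. Items (ii)--(v) are completely analogous, only the closure operator and the family of test open sets change according to the target definition: Fs-closed implies Fsg-closed because $scl(A)=A\le U$ for any semi-open $U\ge A$; F$\alpha$-closed implies F$\alpha$g-closed via $\alpha cl(A)=A\le U$ for open $U\ge A$; Fp-closed implies Fpg-closed via $pcl(A)=A\le U$ for pre-open $U\ge A$; and Fsp-closed implies Fspg-closed via $spcl(A)=A\le U$ for semi-pre-open $U\ge A$. For (vi), the same inequality $scl(A)=A\le U$, now taken over all Fgs-open $U\ge A$, shows that every Fs-closed set is g*s-closed.

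For (vii), one extra observation is required: every semi-open fuzzy set is Fgs-open. Indeed, if $U$ is semi-open then its complement $1_X-U$ is semi-closed, hence $scl(1_X-U)=1_X-U\le V$ for every open $V\ge 1_X-U$, which is exactly the Fgs-closedness of $1_X-U$. So given $A$ g*s-closed with $A\le U$ and $U$ semi-open, $U$ is Fgs-open and the defining inequality of g*s-closedness yields $scl(A)\le U$, i.e.\ $A$ is Fsg-closed. The main obstacle, if any, is only in verifying the auxiliary identity $xcl(A)=A$ for $x$-closed $A$ in each of the five variants; this reduces to checking that the intersection of $x$-closed fuzzy sets is $x$-closed, which follows in each case from the same short monotonicity argument on $cl$ and $Int$ and is already recorded in the references cited next to the definitions.
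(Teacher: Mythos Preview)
Your argument is correct and is exactly the standard direct verification one would give. Note, however, that the paper itself provides no proof of this proposition at all: each item is simply quoted from the cited reference, and the authors move on immediately to the remark that none of the converses hold. So there is no ``paper's own proof'' to compare against; your uniform check that $xcl(A)=A$ whenever $A$ is $x$-closed, together with the observation for (vii) that semi-open implies Fgs-open, is precisely the elementary justification the cited sources rely on.
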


The converse of the each of the above results are not necessarily true.

\begin{prop}(\cite{M}, \cite{w4}, \cite{w12})
 In an FTS $(X, \tau)$ the following results hold for different types of open sets:
\begin{enumerate}
\item Every fuzzy open set is F$\alpha$g-open.
\item Every F$\alpha$-open set is both fuzzy semiopen and fuzzy preopen.
\item Every fuzzy semiopen set is fuzzy semi-preopen
\end{enumerate}
\end{prop}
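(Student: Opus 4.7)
The plan is to translate the three open-set inclusions into inequalities between iterates of $Int$ and $cl$ applied to a fuzzy set $A$, and then exploit (a) the monotonicity of $Int$ and $cl$, (b) the trivial bounds $Int(B)\le B\le cl(B)$, and (c) the domination $\alpha cl(B)\le cl(B)$, which follows because every fuzzy closed set is fuzzy $\alpha$-closed (if $cl(B)=B$ then $cl(Int(cl(B)))=cl(Int(B))\le B$).

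As a first step I would write out the open-set forms of the relevant definitions by complementation: $A$ is F$\alpha$-open iff $A\le Int(cl(Int(A)))$, fuzzy semiopen iff $A\le cl(Int(A))$, fuzzy preopen iff $A\le Int(cl(A))$, and fuzzy semi-preopen iff $A\le cl(Int(cl(A)))$. For the F$\alpha$g-open notion I would keep the closed-set dual description, namely $A$ is F$\alpha$g-open precisely when $1_X-A$ is F$\alpha$g-closed; this will let me attack (i) on the closed-set side, where the hypothesis $A\le U$, $U\in\tau$ is ready to use.

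For (ii), assume $A\le Int(cl(Int(A)))$. The bound $Int(B)\le B$ with $B=cl(Int(A))$ gives $Int(cl(Int(A)))\le cl(Int(A))$, so $A$ is fuzzy semiopen. For preopenness, use $Int(A)\le A$ and monotonicity of $cl$ and $Int$ to get $Int(cl(Int(A)))\le Int(cl(A))$, and chain with the hypothesis to conclude $A\le Int(cl(A))$. Part (iii) proceeds in the same style: from $A\le cl(A)$ monotonicity gives $cl(Int(A))\le cl(Int(cl(A)))$, and chaining with the assumption $A\le cl(Int(A))$ shows that $A$ is fuzzy semi-preopen. For (i) I would argue on closed sets: if $B$ is fuzzy closed and $B\le U$ with $U\in\tau$, then $\alpha cl(B)\le cl(B)=B\le U$, so $B$ is F$\alpha$g-closed; taking complements yields that every fuzzy open set is F$\alpha$g-open.

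Frankly there is no substantial obstacle here: the whole proposition is a sequence of one-line inequalities based on monotonicity together with $\alpha cl\le cl$. The only bookkeeping point that deserves attention is the complementation in (i), where one must make sure that ``$1_X-A$ F$\alpha$g-closed'' is indeed the intended definition of ``$A$ F$\alpha$g-open'' in this paper, so that the closed-set argument transfers verbatim to the open-set statement.
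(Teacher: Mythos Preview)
Your argument is correct: parts (ii) and (iii) follow immediately from monotonicity of $Int$ and $cl$ as you write them, and for (i) your reduction to the closed side via $\alpha cl(B)\le cl(B)=B\le U$ is sound, with the complementation step justified by the paper's blanket convention that ``the complements of the above closed sets are known as the respective open sets.'' There is, however, nothing to compare against: the paper does not prove this proposition but merely cites it from \cite{M}, \cite{w4}, \cite{w12} as known background, so your write-up would in fact supply a proof where the paper gives none.
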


The following lemma is a direct consequence of the definitions.

\begin{lem}
 Let $A$ be a fuzzy subset in an FTS $(X, \tau)$. Then each of the following follows directly from the definitions:
\begin{enumerate}
 \item $spcl(A) \leq scl(A) \leq \alpha cl(A) \leq cl(A)$,
\item $spcl(A) \leq pcl(A) \leq \alpha cl(A)$
\end{enumerate}

\end{lem}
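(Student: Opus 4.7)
The plan is to reduce both inequality chains to inclusions between the underlying families of closed fuzzy sets, and then apply a single monotonicity observation. The observation is this: if every $x_1$-closed fuzzy set is automatically $x_2$-closed, then $x_2cl(A)\le x_1cl(A)$ for every $A$, because each closure operator is the infimum of the fuzzy sets in the corresponding class that dominate $A$, and the $x_2$-family of candidate upper bounds is the larger one. Consequently, both parts of the lemma will drop out once I verify the inclusions F-closed $\subseteq$ F$\alpha$-closed, F$\alpha$-closed $\subseteq$ Fs-closed, F$\alpha$-closed $\subseteq$ Fp-closed, and Fs-closed $\cup$ Fp-closed $\subseteq$ Fsp-closed.

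I would verify these inclusions by direct manipulation of the defining inequalities. If $A$ is fuzzy closed, then $cl(A)=A$ gives $cl(Int(cl(A)))=cl(Int(A))\le A$, so $A$ is F$\alpha$-closed. If $A$ is F$\alpha$-closed, then $cl(Int(cl(A)))\le A$ combined with the bound $Int(cl(A))\le cl(Int(cl(A)))$ (extensivity of $cl$) and the bound $cl(Int(A))\le cl(Int(cl(A)))$ (monotonicity of $cl\circ Int$ applied to $A\le cl(A)$) simultaneously yields $Int(cl(A))\le A$ and $cl(Int(A))\le A$, so $A$ is at once Fs-closed and Fp-closed. Finally, either of $Int(cl(A))\le A$ or $cl(Int(A))\le A$ already dominates $Int(cl(Int(A)))$, so both Fs-closed sets and Fp-closed sets are Fsp-closed.

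Assembling the class inclusions with the monotonicity principle then produces the two chains $spcl(A)\le scl(A)\le \alpha cl(A)\le cl(A)$ and $spcl(A)\le pcl(A)\le \alpha cl(A)$, as required. The argument presents no genuine obstacle; as the statement itself advertises, the content is purely an unpacking of the definitions. The only point requiring any attention is the direction of the inequalities — a larger class of closed sets produces a smaller closure — so one must resist the temptation to reverse the chain when translating the class inclusions back into statements about the closure operators.
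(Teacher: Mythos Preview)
Your argument is correct and is precisely the unpacking the paper has in mind: the paper offers no proof beyond the phrase ``follows directly from the definitions,'' and what you have written is exactly that direct verification via the class inclusions $\text{F-closed}\subseteq\text{F}\alpha\text{-closed}\subseteq\text{Fs-closed}\cap\text{Fp-closed}\subseteq\text{Fsp-closed}$ together with the observation that a larger class of closed sets yields a smaller closure operator.
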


\section{Fuzzy weakly-closed sets} 

Weakly closed set in topological space was introduced by Sundaram {\it {et al}}. in \cite{PA}. In this section, we introduce fuzzy weakly-closed set in an FTS and study some general properties of such sets. 

\begin{defn} Let $(X, \tau)$ be an FTS and $A$ be a fuzzy subset of $X$. Then $A$ is said to be fuzzy weakly-closed
if $cl(A)\leq U$, whenever $A \leq U,~ U$ is a fuzzy semi-open set in $X$. Complement of a fuzzy weakly-closed set is fuzzy weakly-open set.
\end{defn}

\begin{exm}
Consider the FTS $(X, \tau)$, where $X =\{a, b, c\}, ~\tau = \{0_X, 1_X, B\}$ and\\

\begin{math}
 B(x) =\begin{cases}1  & if ~x = b; \\ 0  & otherwise. \end{cases}\\ 
\end{math}

Then  
\begin{math}
 F(x) =\begin{cases}1  & if ~x = a, c; \\ 0  & otherwise, \end{cases} \\
\end{math}

is fuzzy weakly-closed, whereas \\

\begin{math}
E(x) =\begin{cases}1  & if ~x = a, b; \\ 0  & otherwise, \end{cases}\\ 
\end{math}

is not fuzzy weakly-closed.
\end{exm}

\begin{thm} 
Union of two fuzzy weakly-closed set is fuzzy weakly-closed.
\end{thm}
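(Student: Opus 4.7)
The plan is to unpack the definition of fuzzy weakly-closed set and apply it to each of the two sets separately, then recombine using the standard fact that closure distributes over binary unions in a fuzzy topological space.

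More precisely, let $A$ and $B$ be fuzzy weakly-closed and let $U$ be an arbitrary fuzzy semi-open set with $A \vee B \leq U$. First I would observe that since $A \leq A \vee B \leq U$ and $B \leq A \vee B \leq U$, the hypothesis that $A$ and $B$ are fuzzy weakly-closed (applied to the same $U$, which is fuzzy semi-open) yields $cl(A) \leq U$ and $cl(B) \leq U$ respectively. Taking the supremum gives $cl(A) \vee cl(B) \leq U$.

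Next I would invoke the identity $cl(A \vee B) = cl(A) \vee cl(B)$, which holds in any (fuzzy) topological space since closure is monotone and finitely additive. Combining this with the previous inequality yields $cl(A \vee B) \leq U$. Since $U$ was an arbitrary fuzzy semi-open set containing $A \vee B$, this verifies the definition, and hence $A \vee B$ is fuzzy weakly-closed.

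There is no real obstacle here; the argument is a direct verification from the definition, relying only on monotonicity of closure and its additivity over binary joins. The only step worth double-checking is that the union of two semi-open supersets is handled correctly — but here we use a single $U$ containing $A \vee B$, so no such issue arises.
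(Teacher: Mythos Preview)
Your proof is correct and follows essentially the same approach as the paper: take a fuzzy semi-open $U$ containing $A\vee B$, deduce $cl(A)\leq U$ and $cl(B)\leq U$ from the weak-closedness of $A$ and $B$, and conclude $cl(A\vee B)\leq U$. The only difference is that you spell out the identity $cl(A\vee B)=cl(A)\vee cl(B)$ explicitly, whereas the paper uses it tacitly.
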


\begin{proof}
Let $A$ and $B$ be two fuzzy weakly-closed sets in $X$. Let $U$ be a fuzzy semi-open set in $X$ such that $A \vee B \leq U \Rightarrow A \leq U$ and $B \leq U.$ Then by definition of fuzzy weakly closed set, we have $cl(A) \leq U$ and $cl(B) \leq U \Rightarrow cl(A \vee B) \leq U \Rightarrow A \vee B$ is fuzzy weakly-closed.
\end{proof}

\begin{rem} Intersection of two fuzzy weakly-closed sets is not necessarily fuzzy weakly-closed.
\end{rem}

\begin{exm}
Consider the fuzzy sets $A, I, N, F, E$ defined on $X =\{a, b, c, d\}$ as follows:\\

\begin{math}
A(x) =\begin{cases}1  & if ~x = a; \\ 0  & otherwise, \end{cases}\\
\end{math}

\begin{math}
I(x) =\begin{cases}1  & if ~x = b, d; \\ 0  & otherwise, \end{cases} \\
\end{math}

\begin{math}
N(x) =\begin{cases}1  & if ~x = a, b, d; \\ 0  & otherwise, \end{cases}\\ 
\end{math}

\begin{math}
F(x) =\begin{cases}1  & if ~x = a, c; \\ 0  & otherwise, \end{cases}\\ 
\end{math}
 
and 
\begin{math}
E(x) =\begin{cases}1  & if ~x = a, b; \\ 0  & otherwise \end{cases} \\
\end{math}

Then $\tau = \{0_X, A, I, N, 1_X\}$ is a fuzzy topology on $X$. Here $F$ and $E$ are fuzzy weakly-closed sets but $F \wedge E = A $ is not a fuzzy weakly closed set.
\end{exm}

\begin{rem}
Arbitrary union of fuzzy weakly-closed sets is not necessarily fuzzy weakly-closed.
\end{rem}

\begin{thm} 
Intersection of two fuzzy weakly-open sets is fuzzy weakly-open.
\end{thm}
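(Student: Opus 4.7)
The plan is to reduce this statement to the previous theorem (union of two fuzzy weakly-closed sets is fuzzy weakly-closed) via duality, since the definition of fuzzy weakly-open was introduced precisely as the complementary notion.

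First I would fix two fuzzy weakly-open sets $A$ and $B$ and pass to their complements $A^c = 1_X - A$ and $B^c = 1_X - B$, which by definition are fuzzy weakly-closed. Next I would invoke the preceding theorem to conclude that $A^c \vee B^c$ is fuzzy weakly-closed. Then I would apply De Morgan's law for fuzzy sets, which gives $A^c \vee B^c = (A \wedge B)^c$, so that $(A \wedge B)^c$ is fuzzy weakly-closed. Taking complements once more yields that $A \wedge B$ is fuzzy weakly-open, as required.

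There is essentially no main obstacle here, as long as the De Morgan identity $1_X - (A \wedge B) = (1_X - A) \vee (1_X - B)$ is treated as standard (it holds pointwise for Chang-type fuzzy sets). The only micro-step worth double-checking is that the complement operation indeed converts $\vee$ into $\wedge$ and vice versa, which is immediate from $(1_X - A)(x) = 1 - A(x)$ together with $1 - \max(a,b) = \min(1-a, 1-b)$. After that, the chain closed $\Rightarrow$ complement open is just an unwinding of \textbf{Definition}~3.1's closing sentence, so the whole argument is a two- or three-line dualization of the previous theorem rather than a fresh calculation with fuzzy semi-open covers.
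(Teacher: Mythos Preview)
Your proposal is correct and mirrors the paper's own proof almost verbatim: pass to complements, use the earlier theorem that the union of two fuzzy weakly-closed sets is fuzzy weakly-closed, apply De~Morgan to rewrite $A^c \vee B^c$ as $(A \wedge B)^c$, and complement back. There is nothing to add or adjust.
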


\begin{proof}
Let $A$ and $B$ be two fuzzy weakly-open sets $\Rightarrow A^c$ and $B^c$ are fuzzy weakly-closed sets
 $\Rightarrow A^c \vee  B^c $ is a fuzzy weakly-closed set $\Rightarrow (A \wedge B)^c $ is a fuzzy weakly-closed set 
$\Rightarrow A \wedge B $ is a fuzzy weakly-open set.
\end{proof}

\begin{rem}
Union of two fuzzy weakly-open sets need not be fuzzy weakly-open.
\end{rem}

\begin{rem}
Arbitrary intersection of fuzzy weakly-open sets need not be fuzzy weakly-open.
\end{rem}

\begin{thm}
A fuzzy set $A$ in an FTS $(X, \tau)$ is fuzzy weakly open iff $U \leq int(A)$ whenever $U \leq A$ and $U$ is fs-closed.
\end{thm}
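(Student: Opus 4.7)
The plan is to prove this by straightforward complementation, using the identity $int(A)=(cl(A^{c}))^{c}$ (equivalently $cl(A^{c})=(int(A))^{c}$) together with the fact that a fuzzy set is fuzzy semi-open iff its complement is fuzzy semi-closed. Since fuzzy weakly-openness is defined as the complement of fuzzy weakly-closedness, the two directions should mirror each other cleanly.

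For the forward direction I would start with $A$ fuzzy weakly-open, so $A^{c}$ is fuzzy weakly-closed. Given any fs-closed $U$ with $U \leq A$, complementing yields $A^{c} \leq U^{c}$ with $U^{c}$ fuzzy semi-open. The defining inequality for a fuzzy weakly-closed set then gives $cl(A^{c}) \leq U^{c}$, and taking complements on both sides produces $U \leq (cl(A^{c}))^{c} = int(A)$, as required.

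For the reverse direction I would assume the stated property and verify that $A^{c}$ satisfies the definition of fuzzy weakly-closed. Take any fuzzy semi-open $V$ with $A^{c} \leq V$; then $V^{c}$ is fs-closed and $V^{c} \leq A$, so by hypothesis $V^{c} \leq int(A)$. Taking complements and using $(int(A))^{c} = cl(A^{c})$ yields $cl(A^{c}) \leq V$, so $A^{c}$ is fuzzy weakly-closed and hence $A$ is fuzzy weakly-open.

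There is no real obstacle here; the only points to be a little careful about are (i) that semi-open and semi-closed are honest complements of each other in Chang's setting so that $U \mapsto U^{c}$ is a bijection between the two families, and (ii) the interior/closure duality $int(A)=(cl(A^{c}))^{c}$ for fuzzy sets, which is standard. Everything else is mechanical manipulation of the definition of fuzzy weakly-closed given in Definition~2.1.
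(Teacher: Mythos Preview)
Your proposal is correct and is essentially identical to the paper's own proof: both directions proceed by complementation, using that $A^{c}$ is fuzzy weakly-closed iff $A$ is fuzzy weakly-open, that fs-open and fs-closed are complements of each other, and the duality $int(A)=(cl(A^{c}))^{c}$. The paper's argument and yours match step for step.
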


\begin{proof}
Let $A$ be fuzzy weakly open  set such that $U \leq A$ and $U$ is fs-closed $\Rightarrow A^c \leq U^c$, where $U^c$ is fs-open. Now $A^c$ is fuzzy weakly closed, so $cl(A^c) \leq U^c \Rightarrow U \leq (cl(A^c))^c = int(A)$.

Conversely, assume that $U \leq int(A)$ whenever $U \leq A$ where $U$ is fs-closed. Let $A^c \leq V$ where $V$ is fs-open $\Rightarrow V^c \leq A \Rightarrow V^c \leq int(A)$ (by hypothesis)$\Rightarrow (int(A))^c  \leq V \Rightarrow cl(A^c) \leq V \Rightarrow A^c$ is fuzzy weakly closed $\Rightarrow A$ is fuzzy weakly open.
\end{proof}

\begin{thm}
If a fuzzy set $A$ in an FTS $(X, \tau)$ is both fuzzy open and fuzzy generalized closed, then it is fuzzy weakly closed.
\end{thm}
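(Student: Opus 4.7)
The plan is to show that the hypothesis actually forces $A$ to be fuzzy closed, after which the weakly closed condition becomes automatic. Concretely, I would unpack the two definitions side by side: Fg-closed means $cl(A)\leq V$ whenever $A\leq V$ and $V\in\tau$, while fuzzy weakly closed requires $cl(A)\leq U$ whenever $A\leq U$ and $U$ is fuzzy semi-open. So the game is to upgrade from ``fuzzy open'' test sets to ``fuzzy semi-open'' test sets.

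The key trick I would exploit is that $A$ itself is a legitimate test set in the Fg-closed condition, because by hypothesis $A\in\tau$ and trivially $A\leq A$. Applying Fg-closedness with this choice of $V=A$ immediately yields $cl(A)\leq A$, and since the reverse inequality $A\leq cl(A)$ is automatic, we conclude $cl(A)=A$, i.e.\ $A$ is fuzzy closed.

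Once $A$ is fuzzy closed, verifying the weakly closed condition is a one-liner: for any fuzzy semi-open $U$ with $A\leq U$, we have $cl(A)=A\leq U$. Since this holds for every fuzzy semi-open cover $U$ of $A$, the definition of fuzzy weakly closed is satisfied and the proof is complete.

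There is no real obstacle here; the argument is a direct chase through the definitions, and the only subtlety worth flagging is that we are implicitly using that $\tau$-open sets qualify as admissible test sets in the Fg-closed definition, which is exactly how the hypothesis on $A$ gets activated. I would present the proof in two short steps (deduce $cl(A)\leq A$, then verify the semi-open condition) without additional lemmas.
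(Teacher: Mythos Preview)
Your proposal is correct and follows essentially the same approach as the paper: both apply the Fg-closed condition with the test set $V=A$ (legitimate since $A\in\tau$) to obtain $cl(A)\leq A$, conclude that $A$ is fuzzy closed, and then observe that fuzzy closed implies fuzzy weakly-closed. The paper is slightly more terse in the final step, simply invoking the implication ``fuzzy closed $\Rightarrow$ fuzzy weakly-closed'' rather than explicitly checking the semi-open test sets as you do, but the content is identical.
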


\begin{proof}
Let $A$ be an fuzzy open set which is also f$g$-closed $\Rightarrow cl(A) \leq A$ as $A$ is a fuzzy open set containing itself. We know $A \leq cl(A)$ for any fuzzy set in $X$. Hence $A= cl(A) \Rightarrow A$ is fuzzy closed set 
 $\Rightarrow A$ is fuzzy weakly-closed set.
\end{proof}

\begin{rem}
If $A$ and $B$ are two fuzzy sets such that $A \leq B \leq cl(A)$, then $cl(A)=cl(B)$.
\end{rem}

\begin{thm}
Let $A$ be a fuzzy weakly-closed subset of an FTS $(X, \tau)$ and suppose the fuzzy subset $B$ is such that $A \leq B \leq cl(A)$, then $B$ is also fuzzy weakly-closed.
\end{thm}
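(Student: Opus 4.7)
The plan is to unwind the definition of fuzzy weakly-closed for $B$ and reduce everything to the corresponding property of $A$, using the sandwich $A\leq B\leq cl(A)$ together with the immediately preceding remark that $cl(A)=cl(B)$.

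First I would take an arbitrary fuzzy semi-open set $U$ in $X$ with $B\leq U$, since this is the only type of test set appearing in the definition. From $A\leq B\leq U$ and the fact that $A$ itself is fuzzy weakly-closed, I would directly conclude $cl(A)\leq U$.

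Next I would transfer this to $B$. The cleanest route is to invoke the preceding Remark: because $A\leq B\leq cl(A)$, we have $cl(B)=cl(A)$, hence $cl(B)\leq U$. Alternatively, one may argue monotonicity and idempotence of the closure operator, namely $cl(B)\leq cl(cl(A))=cl(A)\leq U$, which avoids even citing the remark; either version gives the required inequality. Since $U$ was an arbitrary fuzzy semi-open superset of $B$, this shows $B$ is fuzzy weakly-closed.

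There is essentially no obstacle here: the argument is a one-line reduction, and the only thing to be careful about is remembering that in the definition $U$ ranges over fuzzy \emph{semi-open} sets (not fuzzy open sets), so the hypothesis on $A$ is applied against the same class of test sets used to verify the conclusion for $B$. No new machinery beyond the definition and the closure monotonicity/idempotence (equivalently, the preceding remark) is needed.
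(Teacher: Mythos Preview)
Your argument is correct and matches the paper's proof almost verbatim: take a fuzzy semi-open $U$ with $B\leq U$, use $A\leq B$ and the weak-closedness of $A$ to get $cl(A)\leq U$, then invoke the preceding remark $cl(A)=cl(B)$ to conclude $cl(B)\leq U$. The alternative you mention via $cl(B)\leq cl(cl(A))=cl(A)$ is a harmless variant of the same idea.
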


\begin{proof}
Let $A$ be a fuzzy weakly-closed set and $U$ be an fuzzy semi-open set such that $B \leq U \Rightarrow A \leq U$. Then by definition of fuzzy weakly-closed set $cl(A) \leq U$. But $cl(B) = cl(A) \Rightarrow cl(B) \leq U \Rightarrow B$ is fuzzy weakly-closed set.
\end{proof}

\begin{defn}
Let $A$ be a fuzzy set in an FTS $X$. Then fuzzy weakly-closure and weakly-interior of $A$(denoted by $wcl(A)$
 and $wint(A)$ respectively) are defined as follows:
 
$wcl(A)= \bigwedge \{B ~|~ B$ is fuzzy weakly-closed set and $A\leq B\}$.

$wint(A)= \bigvee \{C ~|~ C$ is fuzzy weakly-open set and $C\leq A\}$.
\end{defn}

If $A$ is a fuzzy weakly-closed set then $wcl(A) = A$ but the converse is not necessarily true since intersection of any two fuzzy weakly-closed sets may not be fuzzy weakly-closed. 

Following results are obtained from the definitions:
\begin{center}
$wcl(U^c)= (wint(U))^c$ and $wint( U^c)= (wcl(U))^c$.
\end{center}

\section{Comparative study}

In this section, fuzzy weakly closed set is compared with different fuzzy closed sets. Below is a diagram describing inter relationship of fuzzy weakly closed sets and other types of fuzzy closed sets. As implications are direct outcomes of the definitions, we are omitting the proofs for the same. However, counter examples are given wherever necessary.

\begin{figure}[h]
\begin{center}
{\includegraphics[width=12cm, height=8cm]{./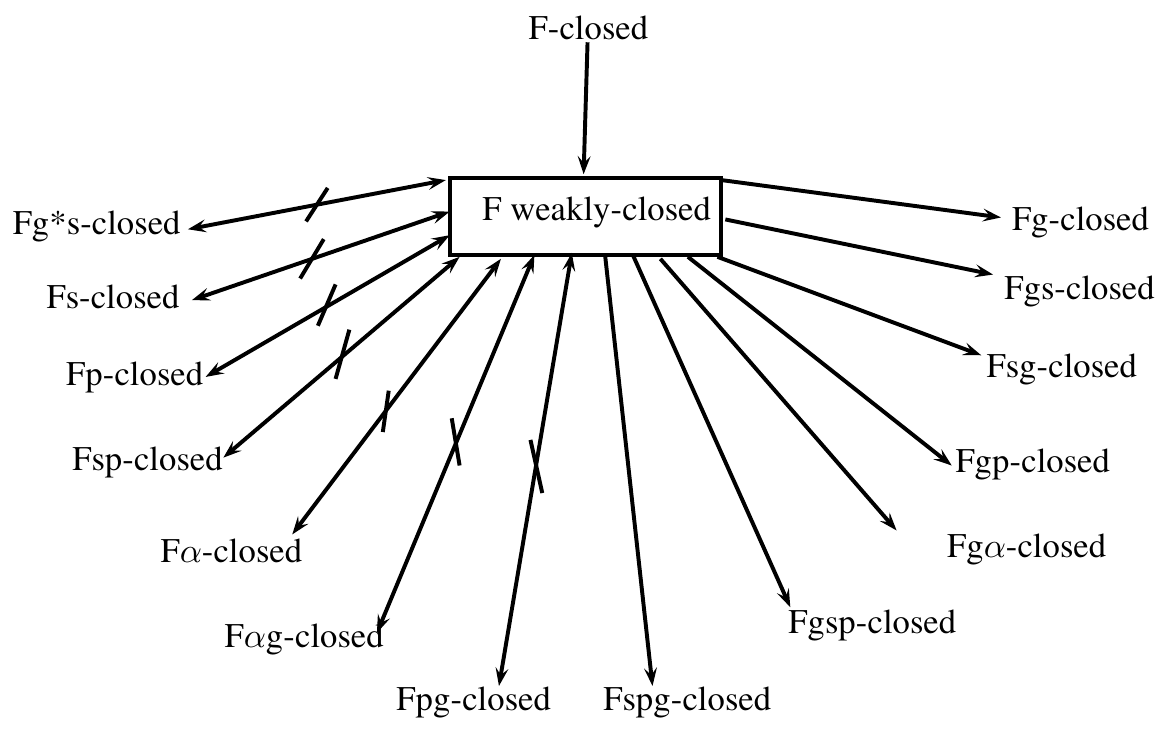}}
\caption{Comparison of fuzzy weakly-closed set with different fuzzy closed sets}
\end{center}
\end{figure}

As shown in the figure the following are some examples that the converses of some implications are not necessarily true and some of the classes of closed sets are independent of the class of fuzzy weakly closed sets.

\begin{exm}
Taking the same FTS as defined in Example 2.5, we have $N$ is a fuzzy weakly-closed set but not a fuzzy closed set.  
\end{exm}

\begin{exm}
For the FTS in Example 2.5,\\ 

\begin{math}
I(x) =\begin{cases}1  & if ~x = b, d; \\ 0  & otherwise \end{cases} \\
\end{math}

is  f$gs$-closed and f$sg$-closed set but not fuzzy weakly-closed set.
\end{exm}

\begin{exm}
For the FTS in Example 2.5,\\

\begin{math}
G(x) =\begin{cases}1  & if ~x = a, d; \\ 0  & otherwise \end{cases}\\ 
\end{math}

is a f$g$-closed set but not fuzzy weakly-closed set. 
\end{exm}

\begin{exm}
But taking the FTS in Example 2.5, $A$ is a f$g \alpha$-closed set but not fuzzy weakly-closed set. 
\end{exm}

\begin{exm}
Again for the FTS in Example 2.5, $A$ is a f$spg$-closed set but not fuzzy weakly-closed set.
\end{exm}

\begin{exm}
Let $X =\{a, b, c\}$ and $H$ be a fuzzy set on it defined by\\

\begin{math}
E(x) =\begin{cases}1  & if ~x = a, b; \\ 0  & otherwise. \end{cases} \\
\end{math}

Then  $\tau = \{0_X, 1_X, E\}$ is a fuzzy topology on $X$. Here $E$ is a f$gsp$-closed but not fuzzy weakly-closed.\\
\end{exm}

\begin{exm}
For Example 2.5,\\

\begin{math}
C(x) =\begin{cases}1  & if ~x = c; \\ 0  & otherwise \end{cases} 
\end{math}
is a f$gp$-closed set but not a fuzzy weakly-closed set.
\end{exm}

\begin{exm}
Taking the same example as Example 2.5, $A$ is a f$s$-closed, f$\alpha$-closed and f$g^*s$-closed, $N$ is a f$sp$-closed, $C$ is  f$p$-closed and f$gp$-closed but are not fuzzy weakly-closed; on the other hand $E$ is fuzzy weakly-closed set but not f$s$-closed, f$\alpha$-closed, f$p$-closed set and f$pg$-closed set and\\

\begin{math}
J(x) =\begin{cases}1  & if ~x = c, d; \\ 0  & otherwise \end{cases} 
\end{math} 
is fuzzy weakly-closed but not f$sp$-closed as well as f$g^*s$-closed.
\end{exm}

\section{Compactness}

A topological space is said to be compact if every open cover of it has a finite subcover, i.e., the property of a topological space being compact depends on open sets. As open set is generalized to weakly open set, the concept of compactness can also be generalized in the similar manner. In this section, we introduce fuzzy weakly-compactness and fuzzy weakly-closed space, which are then characterized using fuzzy filterbases.\\
  
The notion of compactness for FTS was introduced by C.L. Chang \cite{CA}. But Lowen \cite{RL} differed from this and subsequently definition for compactness by Chang is popularly known as quasi compactness.\\

We present below a result showing relationship of f$g^*s$-closed sets and the so called quasicompactness. 

\begin{thm}
 Let $(X, \tau)$ be a fuzzy quasi compact topological space. If $A$ is a fuzzy weakly-closed subset of $X$, then $A$ is also quasi compact.
\end{thm}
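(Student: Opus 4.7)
The plan is to adapt the classical proof that a closed subspace of a compact space is compact, with the fuzzy weakly-closed condition playing the role of ``closed'' at the step where we enlarge the cover.

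First I would fix an arbitrary fuzzy open cover $\{U_\alpha \mid \alpha \in \Lambda\}$ of $A$, i.e.\ a family of fuzzy open sets with $A \leq \bigvee_{\alpha} U_\alpha$. Writing $U = \bigvee_{\alpha} U_\alpha$, the set $U$ is fuzzy open, hence fuzzy semi-open (every fuzzy open set is fuzzy semi-open, which is the special case $\mathrm{Int}(U)=U$ of the definition). Since $A$ is fuzzy weakly-closed and $A \leq U$ with $U$ fuzzy semi-open, the defining property gives $cl(A) \leq U = \bigvee_{\alpha} U_\alpha$.

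Next I would enlarge the family by adjoining $(cl(A))^c$, which is fuzzy open because $cl(A)$ is fuzzy closed. The pointwise inequality
\[
\bigl(\bigvee_{\alpha} U_\alpha\bigr) \vee (cl(A))^c \;\geq\; cl(A) \vee (cl(A))^c \;=\; 1_X
\]
shows that $\{U_\alpha\}_{\alpha} \cup \{(cl(A))^c\}$ is a fuzzy open cover of $X$. Invoking quasi compactness of $(X,\tau)$, I extract a finite subcover, which after possibly discarding the extra term amounts to indices $\alpha_1,\dots,\alpha_n$ satisfying $U_{\alpha_1} \vee \cdots \vee U_{\alpha_n} \vee (cl(A))^c = 1_X$.

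Finally I must verify that $U_{\alpha_1},\dots,U_{\alpha_n}$ already cover $A$. At each point $x \in X$, the equality above forces $\max\bigl((U_{\alpha_1} \vee \cdots \vee U_{\alpha_n})(x),\, 1-cl(A)(x)\bigr)=1$, so either $(U_{\alpha_1}\vee\cdots\vee U_{\alpha_n})(x)=1$, in which case $A(x)\leq 1$ trivially, or $cl(A)(x)=0$, in which case $A(x)\leq cl(A)(x)=0 \leq (U_{\alpha_1}\vee\cdots\vee U_{\alpha_n})(x)$. Either way $A \leq U_{\alpha_1} \vee \cdots \vee U_{\alpha_n}$, producing the required finite subcover. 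The main conceptual step—and the only place the hypothesis on $A$ is used—is the passage from $A \leq U$ to $cl(A) \leq U$ via fuzzy weak-closedness; the rest is the usual enlargement-of-cover trick, with the minor subtlety of checking the pointwise cover condition in the fuzzy (max-based) setting, which is where I expect an inattentive reader to stumble.
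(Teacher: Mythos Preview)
Your argument contains a genuine gap at the enlargement step. You assert $cl(A)\vee (cl(A))^c = 1_X$, but this is precisely the law of excluded middle, which fails for fuzzy sets: if $cl(A)(x)=\tfrac{1}{2}$ at some point $x$, then $(cl(A))^c(x)=\tfrac{1}{2}$ as well, and their pointwise supremum is $\tfrac{1}{2}$, not $1$. Hence $\{U_\alpha\}_{\alpha}\cup\{(cl(A))^c\}$ need not be a fuzzy open cover of $X$, and quasi compactness of $X$ cannot be invoked. The classical enlargement-of-cover trick simply does not transfer to the fuzzy (max/min) setting; ironically, the ``minor subtlety'' you flag in the last paragraph is not the issue, but the step just before it is.

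The paper avoids this obstacle by never enlarging the cover to all of $X$. After obtaining $cl(A)\leq \bigvee_\alpha U_\alpha$ from fuzzy weak-closedness (exactly your first step), it observes that $cl(A)$ is a \emph{closed} fuzzy set in the quasi compact space $X$ and is therefore itself quasi compact; the finite subcover is then extracted directly from the given open cover of $cl(A)$, and since $A\leq cl(A)$ the same finite family covers $A$. Thus the role of the enlargement trick is taken over by the standing fact ``closed fuzzy subsets of a quasi compact space are quasi compact,'' which the paper invokes without further comment.
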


\begin{proof}
Let $\{U_{\lambda} ~|~\lambda \in \Lambda\}$ be a fuzzy open cover of $A$, i.e., $A \leq \underset{\lambda \in \Lambda}{\bigvee} U_\lambda$. But $\underset{\lambda \in \Lambda}{\bigvee} U_\lambda$ is f$s$-open set containing $A$, which is fuzzy weakly-closed $\Rightarrow cl(A) \leq \underset{\lambda \in \Lambda}{\bigvee} U_{\lambda}$. We have $cl(A)$ is a closed fuzzy set and hence is quasi compact
$\Rightarrow \exists$ a finite subfamily $\{U_{\lambda_i} ~|~i = 1, 2, . . . ,n\}$ such that $cl(A) \leq \underset{i=1}{\overset{n}{\bigvee}} U_{\lambda_i} \Rightarrow A \leq \underset{i=1}{\overset{n}{\bigvee}} U_{\lambda_i} \Rightarrow A$ is fuzzy quasi compact. 
\end{proof}

\begin{defn}
An FTS $(X, \tau)$ is said to be fuzzy weakly-compact iff for every family $\{G_{\lambda} ~|~\lambda \in \Lambda\}$ of fuzzy weakly-open sets satisfying $\underset{\lambda \in \Lambda}{\bigvee} G_\lambda =1_X$, there is a finite subfamily $\{G_{\lambda_i} ~|~i = 1, 2, . . . ,n\}$ such that $\underset{i=1}{\overset{n}{\bigvee}} G_{\lambda_i} =1_X$.
\end{defn}

\begin{exm}
Let $X=[0, 1]$ and $A_n:X \rightarrow [0, 1]$ be defined by \\

\begin{math}
A_n(x) =\begin{cases}1  & if ~n~is~odd; \\ 1-\frac{1}{n}  & if ~n~is~even. \end{cases}
\end{math}\\

Then $\tau=\{0_X, 1_X, A_n\}$, where $n \in \mathbb{N}$ is a fuzzy topology on $X$. If $\mu$ is a weakly open cover of $X$, then $1_X$ must be a member of $\mu$, otherwise there does not exist a cover of $X$. Hence $X$ is fuzzy weakly-compact.
\end{exm}

\begin{defn}
A fuzzy set U in an FTS $X$ is said to be fuzzy weakly-compact relative to $X$ iff for every family $\{G_{\lambda} ~|~\lambda \in \Lambda\}$ of fuzzy weakly-open sets satisfying $\underset{\lambda \in \Lambda}{\bigvee} G_\lambda (x) \geq U(x),~\forall x\in Supp(U)$, there is a finite subfamily $\{G_{\lambda_i} ~|~i = 1, 2, . . . ,n\}$  such that $(\underset{i=1}{\overset{n}{\bigvee}} G_{\lambda_i})(x) \geq U(x),$ for every $x \in Supp(U)$.
\end{defn}

\begin{thm}
An FTS $X$ is fuzzy weakly-compact iff $X$ does not contain a fuzzy filterbase of fuzzy weakly-closed sets such that the corresponding collection of fuzzy weakly-open sets forms a cover of $X$.
\end{thm}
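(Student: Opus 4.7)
The plan is to prove both directions by contrapositive, using the duality between fuzzy weakly-open and fuzzy weakly-closed sets: if $\{F_\lambda \mid \lambda\in\Lambda\}$ is any family of fuzzy subsets of $X$, then $\bigvee_\lambda F_\lambda^c = 1_X$ if and only if $\bigwedge_\lambda F_\lambda = 0_X$. So, covers of $X$ by fuzzy weakly-open sets correspond, via complementation, exactly to families of fuzzy weakly-closed sets whose meet is $0_X$. This identification, combined with Definition 1.2 (a filterbase is precisely a family whose \emph{finite} meets are nonzero), lets us translate the compactness statement directly into the filterbase statement.

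For the forward implication, I would assume $X$ is fuzzy weakly-compact and suppose, toward contradiction, that $\{F_\lambda\}$ is a fuzzy filterbase of fuzzy weakly-closed sets such that $\{F_\lambda^c\}$ covers $X$, i.e.\ $\bigvee_\lambda F_\lambda^c = 1_X$. Applying Definition 4.2 yields a finite subfamily with $\bigvee_{i=1}^n F_{\lambda_i}^c = 1_X$, and taking complements gives $\bigwedge_{i=1}^n F_{\lambda_i} = 0_X$, contradicting the filterbase condition on $\{F_\lambda\}$.

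For the converse, I would assume $X$ is not fuzzy weakly-compact, so there exists a family $\{G_\lambda\}$ of fuzzy weakly-open sets with $\bigvee_\lambda G_\lambda = 1_X$ but no finite subfamily whose join equals $1_X$. Then for every finite index set $\{\lambda_1,\ldots,\lambda_n\}$ we have $\bigwedge_{i=1}^n G_{\lambda_i}^c = \bigl(\bigvee_{i=1}^n G_{\lambda_i}\bigr)^c \neq 0_X$, so $\{G_\lambda^c\}$ is a fuzzy filterbase of fuzzy weakly-closed sets whose complements form a fuzzy weakly-open cover of $X$, which is what we needed to produce.

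The argument is essentially bookkeeping with De Morgan's laws, so there is no real obstacle in the proof itself; the only point requiring care is to be explicit that the relevant "finiteness" condition matches on both sides, namely that the negation of "\emph{some} finite subfamily joins to $1_X$" is exactly the statement "\emph{every} finite subfamily meets (after complementing) to something nonzero," which is the definition of filterbase used in Definition 1.2.
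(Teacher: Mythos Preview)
Your proposal is correct and follows essentially the same route as the paper: both arguments prove the contrapositive of each direction by passing between a fuzzy weakly-open cover $\{G_\lambda\}$ and the family of complements $\{G_\lambda^c\}$, using De Morgan's laws to identify ``no finite subcover'' with ``all finite meets are nonzero'' (the filterbase condition). Your write-up is in fact slightly more careful than the paper's in making explicit that the filterbase of closed sets must be one whose complements cover $X$, which the paper's converse direction leaves somewhat implicit.
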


\begin{proof}
Equivalently we show, an FTS $X$ is not fuzzy weakly-compact iff $X$ contains atleast one fuzzy filterbase of fuzzy weakly-closed sets such that the corresponding collection of fuzzy weakly-open sets forms a cover of $X$.

Let $X$ be an FTS which is not fuzzy weakly-compact $\Rightarrow~\exists$ a fuzzy weakly-open cover of $X$, say $\{A_\lambda~|~ \lambda \in \Lambda\}$ without a finite subcover. i.e., every finite subcollection $\{A_{\lambda_i}~|~ i = 1, 2, . . . , n\}$ is such that $\underset{i=1}{\overset{n}{\bigvee}} A_{\lambda_i} \neq 1_X \Rightarrow \underset{i=1}{\overset{n}{\bigwedge}} A_{\lambda_i}^c \neq 0_X \Rightarrow \{A_\lambda^c~|~\lambda \in \Lambda\}$ forms a fuzzy filterbase of fuzzy weakly-closed sets.

Conversely, assume $X$ contains atleast one fuzzy filterbase of fuzzy weakly-closed sets in $X$. If possible, let $X$ be fuzzy weakly-compact. So, every fuzzy weakly-open cover of $X$, say $\{A_\lambda~|~ \lambda \in \Lambda\}$, has a finite subcover, $\{A_{\lambda_i}~|~ i = 1, 2, . . . , n\}$, i.e., $\underset{i=1}{\overset{n}{\bigvee}} A_{\lambda_i} = 1_X \Rightarrow \underset{i=1}{\overset{n}{\bigwedge}} A_{\lambda_i}^c = 0_X \Rightarrow$ for every collection of fuzzy weakly-closed sets in $X$, there is atleast one subcollection with empty intersection. Therefore, a collection of fuzzy weakly-closed sets can not form a fuzzy filterbase.
\end{proof}

\begin{thm}
An FTS $X$ is fuzzy weakly-compact if for every fuzzy filterbase $\Gamma $ in $X$, $\underset{A \in \Gamma}{\wedge} wcl(A) \neq 0_X$.
\end{thm}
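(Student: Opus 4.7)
The plan is to argue by contrapositive and reduce directly to the filterbase characterisation established in the preceding theorem. Assume the space $X$ is not fuzzy weakly-compact; I will exhibit a fuzzy filterbase $\Gamma$ with $\bigwedge_{A \in \Gamma} wcl(A) = 0_X$, which contradicts the hypothesis.

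First I would apply the previous theorem to obtain, from the failure of weak compactness, a fuzzy filterbase $\Gamma = \{A_\lambda^c \mid \lambda \in \Lambda\}$ consisting of fuzzy weakly-closed sets such that the corresponding family $\{A_\lambda \mid \lambda \in \Lambda\}$ of fuzzy weakly-open sets satisfies $\bigvee_{\lambda \in \Lambda} A_\lambda = 1_X$. Next I would use the remark following the definition of $wcl$, namely that $wcl(B) = B$ whenever $B$ is fuzzy weakly-closed, to conclude that for each member $A_\lambda^c$ of $\Gamma$ we have $wcl(A_\lambda^c) = A_\lambda^c$.

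Then the computation is immediate: by De Morgan,
\[
\bigwedge_{\lambda \in \Lambda} wcl(A_\lambda^c) \;=\; \bigwedge_{\lambda \in \Lambda} A_\lambda^c \;=\; \Bigl(\bigvee_{\lambda \in \Lambda} A_\lambda\Bigr)^c \;=\; 1_X^c \;=\; 0_X.
\]
This directly contradicts the standing hypothesis that $\bigwedge_{A \in \Gamma} wcl(A) \neq 0_X$ for every fuzzy filterbase $\Gamma$. Hence $X$ must be fuzzy weakly-compact.

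The only delicate point, and the one I would verify carefully before writing the proof formally, is the implicit conversion between the filterbase condition (every finite meet is nonzero) and the total meet equaling $0_X$: the filterbase furnished by the previous theorem arises from a cover without a finite subcover, so its defining property is about finite meets of the $A_\lambda^c$, whereas here I am using the full meet. This is not actually an obstacle because the hypothesis quantifies over all filterbases with the condition stated on the full meet $\bigwedge_{A\in\Gamma} wcl(A)$; the filterbase I produce satisfies the filterbase condition (finite meets nonzero, since no finite subfamily of $\{A_\lambda\}$ covers $X$) while its full meet vanishes, which is exactly what generates the contradiction.
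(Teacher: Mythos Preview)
Your argument is correct and follows essentially the same route as the paper: assume non-compactness, pass to the filterbase $\{A_\lambda^c\}$ of weakly-closed complements of a cover without finite subcover, and use $wcl(A_\lambda^c)=A_\lambda^c$ together with De~Morgan to obtain $\bigwedge wcl(A_\lambda^c)=0_X$. The only cosmetic difference is that you invoke the preceding theorem to obtain the filterbase, whereas the paper reproves that step inline; your last paragraph correctly identifies and dismisses the one point that might look delicate.
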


\begin{proof}
If possible, suppose $\{A_\lambda~|~ \lambda \in \Lambda\}$ is a fuzzy weakly-open cover of $X$ which does not have a finite subcover. Then for every finite subcollection $\{A_{\lambda_i}~|~ i = 1, 2, . . . , n\}$, there exists $x \in X$ such that $A_{\lambda_i}(x) < 1$ for each $i = 1, 2, . . . , n.$ Then 
${A_{\lambda_i}}^c > 0_X \Rightarrow \overset{n}{\underset{i=1}{\bigwedge}} {A_{\lambda_i}}^c \neq 0_X \Rightarrow \{{A_\lambda}^c~|~ \lambda \in \Lambda\}$ 
forms a fuzzy filterbase of fuzzy weakly-closed set in $X$. Since $\{A_\lambda~|~ \lambda \in \Lambda\}$ is fuzzy weakly-open cover of $X$, $(\underset{\lambda \in \Lambda}{\vee}A_\lambda)(x)=1$ 
for every $x \in X$ and hence $(\underset{\lambda \in \Lambda}{\bigwedge} {A_\lambda}^c)(x)=0 \Rightarrow (\underset{\lambda \in \Lambda}{\bigwedge} wcl {A_\lambda}^c)(x)=0$
, a contradiction. Hence $X$ is fuzzy weakly-compact.
\end{proof}

\begin{thm}
If an FTS $X$ is fuzzy weakly-compact then for every fuzzy filterbase $\Gamma $ of fuzzy weakly-closed sets in $X, \underset{A \in \Gamma}{\wedge} cl(A) \neq 0_X$.
\end{thm}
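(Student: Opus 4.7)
The plan is to argue by contradiction, reducing the filterbase condition to the weakly-compactness of $X$ via the standard complement trick used in the previous theorem.

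First I would suppose, for the sake of contradiction, that $\Gamma$ is a fuzzy filterbase of fuzzy weakly-closed sets with $\bigwedge_{A \in \Gamma} cl(A) = 0_X$. Taking complements, this yields $\bigvee_{A \in \Gamma} (cl(A))^c = 1_X$, so the family $\{(cl(A))^c : A \in \Gamma\}$ covers $X$. Each $(cl(A))^c$ is fuzzy open, and I need the quick observation that every fuzzy open set is fuzzy weakly-open: if $F$ is fuzzy closed then $cl(F) = F \leq U$ whenever $F \leq U$ (no matter what $U$ is), so $F$ is fuzzy weakly-closed; taking complements, every fuzzy open set is fuzzy weakly-open. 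Hence the family above is a fuzzy weakly-open cover of $X$.

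Next I would invoke the hypothesis that $X$ is fuzzy weakly-compact to extract a finite subcover: there exist $A_1, \dots, A_n \in \Gamma$ with $\bigvee_{i=1}^{n} (cl(A_i))^c = 1_X$, equivalently $\bigwedge_{i=1}^{n} cl(A_i) = 0_X$. Since $A_i \leq cl(A_i)$ always holds, it follows that
\[
\bigwedge_{i=1}^{n} A_i \;\leq\; \bigwedge_{i=1}^{n} cl(A_i) \;=\; 0_X,
\]
so $\bigwedge_{i=1}^{n} A_i = 0_X$. This directly contradicts the defining property of the fuzzy filterbase $\Gamma$, which requires every finite meet of its members to be nonzero.

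The argument is essentially a reformulation of the filterbase-cover duality already exploited in the previous theorem; the only real point to verify is the elementary fact that fuzzy open sets are fuzzy weakly-open, so I do not anticipate a genuine obstacle. Care is needed only to keep the distinction between $A$ being weakly-closed and $A$ being closed: we are not claiming $A = cl(A)$, we are only using $A \leq cl(A)$, which is enough to push the contradiction through the filterbase hypothesis.
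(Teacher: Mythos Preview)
Your proof is correct and follows essentially the same route as the paper's own argument: assume $\bigwedge_{A\in\Gamma} cl(A)=0_X$, pass to the complements $(cl(A))^c$ to obtain a fuzzy weakly-open cover, extract a finite subcover by weakly-compactness, and derive $\bigwedge_{i=1}^{n} A_i = 0_X$ via $A_i\leq cl(A_i)$ to contradict the filterbase condition. The only difference is that you make explicit the (easy) fact that fuzzy open sets are fuzzy weakly-open, which the paper uses without comment.
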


\begin{proof}
Let $X$ be a weakly-compact FTS. If possible, assume that, there exists a filterbase $\Gamma$ of weakly-closed fuzzy sets in $X$ such that 
$\underset{A \in \Gamma}{\bigwedge}cl (A)=0_X \Rightarrow \underset{A \in \Gamma}{\bigvee}{(clA)}^c=1_X \Rightarrow \{{(clA)}^c~|~A \in \Gamma \}$ is a fuzzy weakly-open cover of $X$, Then by definition of fuzzy weakly-compactness, $\exists$ a finite subcollection $\{{(clA_i)}^c~|~A \in \Gamma, i= 1, 2, . . . ,n \}$ 
such that $\overset{n}{\underset{i=1}{\bigvee}}  {(clA_i)}^c=1_X \Rightarrow (\overset{n}{\underset{i=1}{\bigvee}}{A_i}^c)=1_X  \Rightarrow (\overset{n}{\underset{i=1}{\bigwedge}} A_i)=0_X$, a contradiction. Hence $\underset{A \in \Gamma}{\wedge} cl(A) \neq 0_X$. 
\end{proof}

\begin{thm}
A fuzzy subset $U$ in an FTS $X$ is fuzzy weakly-compact relative to $X$ if for every fuzzy filterbase $\Gamma$ of fuzzy weakly-closed sets in $X$,  every finite subcollection of $\Gamma$ is quasi coincident with $U$ and  $(\underset{G \in \Gamma}{\bigwedge}wclG)~\wedge U \neq 0_X$. 
\end{thm}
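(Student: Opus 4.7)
My approach is to run the natural contrapositive argument: assume that $U$ is not fuzzy weakly-compact relative to $X$, manufacture from a counterexample cover a fuzzy filterbase $\Gamma$ of weakly-closed sets that satisfies the two-part premise of the hypothesis, and derive a contradiction against its conclusion $(\bigwedge_{G\in\Gamma} wcl(G)) \wedge U \neq 0_X$.

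From the failure of relative compactness (the negation of Definition~4.4), I extract a family $\{A_\lambda \mid \lambda \in \Lambda\}$ of fuzzy weakly-open sets with $(\bigvee_\lambda A_\lambda)(x) \geq U(x)$ on $Supp(U)$ but no finite subfamily doing the same. For each finite index set $\{\lambda_1, \ldots, \lambda_n\}$ there is then a witness $x \in Supp(U)$ at which $\bigvee_{i=1}^{n} A_{\lambda_i}(x) < U(x)$, hence $\bigwedge_{i=1}^{n} A_{\lambda_i}^c(x) > 1 - U(x) \geq 0$. This single inequality does double duty: it shows both that $\Gamma = \{A_\lambda^c \mid \lambda \in \Lambda\}$ is a fuzzy filterbase of weakly-closed sets (its finite meets are not $0_X$) and, since $\bigwedge_{i=1}^{n} A_{\lambda_i}^c(x) + U(x) > 1$, that every finite subcollection of $\Gamma$ is quasi-coincident with $U$. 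Applying the hypothesis to this $\Gamma$, and using $wcl(A_\lambda^c) = A_\lambda^c$ for each weakly-closed $A_\lambda^c$, gives $(\bigwedge_\lambda A_\lambda^c) \wedge U \neq 0_X$; on the other hand the full cover inequality forces $\bigwedge_\lambda A_\lambda^c(x) \leq 1 - U(x)$ on $Supp(U)$ and $U(x) = 0$ off $Supp(U)$.

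The delicate step is turning these two opposing statements into a contradiction: naively, $\min(1 - U(x), U(x))$ is strictly positive whenever $0 < U(x) < 1$, so $((\bigwedge_\lambda A_\lambda^c) \wedge U)(x) = 0$ does not follow outright from $\bigwedge_\lambda A_\lambda^c(x) \leq 1-U(x)$. I expect to need to read the cover condition in the stronger Chang-style sense $\bigvee_\lambda A_\lambda(x) = 1$ on $Supp(U)$, the convention tacitly used in the proof of the analogous absolute theorem for $X$, which sharpens the bound to $\bigwedge_\lambda A_\lambda^c(x) = 0$ on $Supp(U)$ and closes the contradiction cleanly. I would make this convention explicit at the outset of the proof.
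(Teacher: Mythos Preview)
Your plan is essentially the same as the paper's proof: both argue by contraposition, build the filterbase $\Gamma=\{A_\lambda^{\,c}\}$ from a weakly-open cover of $U$ with no finite subcover, verify the filterbase and quasi-coincidence conditions, invoke the hypothesis to get $(\bigwedge_\lambda wcl(A_\lambda^{\,c}))\wedge U\neq 0_X$, replace $wcl(A_\lambda^{\,c})$ by $A_\lambda^{\,c}$, and finish by contradicting the covering condition.

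The one point worth flagging is that the gap you isolate in the last step is present in the paper's own proof as well. The paper simply writes that $(\bigwedge_\lambda A_\lambda^{\,c})(x)>0$ for some $x\in Supp(U)$ gives $(\bigvee_\lambda A_\lambda)(x)<1$, ``a contradiction,'' without reconciling this with Definition~4.4, which only demands $(\bigvee_\lambda A_\lambda)(x)\geq U(x)$ on $Supp(U)$. Your proposed remedy---reading the cover condition in the Chang sense $(\bigvee_\lambda A_\lambda)(x)=1$ on $Supp(U)$, as is tacitly done in the absolute version---is exactly what the paper appears to rely on implicitly; making it explicit is an improvement rather than a deviation.
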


\begin{proof}
Assume $U$ is not fuzzy weakly-compact relative to $X$, then $\exists$ a fuzzy weakly-open covering $\{A_\lambda~|~ \lambda \in \Lambda \}$ of $U$ without any finite subcover. \\This implies
$(\underset{i=1}{\overset{n}{\bigvee}}A_{\lambda_i})(x) < U(x)$ for some $x \in Supp(U),$ for every finite subfamily  of $\{A_{\lambda_i}~|~ i=1, 2, . . . ,n\}$. So, $(\underset{i=1}{\overset{n}{\bigwedge}}A_{\lambda_i}^c)(x)  > 0 \Rightarrow \Gamma = \{ A_\lambda^c~|~ \lambda \in \Lambda\}$ forms a filterbase of weakly-closed fuzzy sets in $X$ and $(\underset{i=1}{\overset{n}{\bigvee}}A_{\lambda_i}^c)~q~ U$. Next by hypothesis $(\underset{A_\lambda^c \in \Gamma}{\bigwedge}wcl(A_\lambda^c))~\wedge U \neq 0_X \Rightarrow (\underset{A_\lambda^c \in \Gamma}{\bigwedge}A_\lambda^c)~\wedge U \neq 0_X.$ Then for some $x \in Supp(U), (\underset{A_\lambda^c \in \Gamma}{\bigwedge}A_\lambda^c)(x) > 0 \Rightarrow (\underset{A_\lambda^c \in \Gamma}{\bigvee}A_\lambda)(x) < 1,$ a contradiction. Hence $U$ is fuzzy weakly-compact relative to $X$.
\end{proof}

\begin{thm}
If  $U$ is a fuzzy weakly-compact set relative to an FTS $X$ then for every fuzzy filterbase $\Gamma$ of fuzzy weakly-closed sets in $X$, every finite subcollection of $\Gamma$ is quasi coincident with $U$ and $(\underset{G \in \Gamma}{\bigwedge}clG)~\wedge U \neq 0_X$. 
\end{thm}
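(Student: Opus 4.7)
The natural reading of the statement, following the structure of the preceding theorem, is that whenever the finite subcollections of $\Gamma$ are quasi-coincident with $U$, the conclusion $(\bigwedge_{G \in \Gamma} cl(G))\wedge U \neq 0_X$ must hold. I would prove this by contradiction, in exact parallel with the sufficiency proof of Theorem 4.5: assume $U$ is fuzzy weakly-compact relative to $X$, assume $\Gamma$ is a filterbase of fuzzy weakly-closed sets whose finite subcollections are quasi-coincident with $U$, and suppose for contradiction that $\bigl(\bigwedge_{G \in \Gamma} cl(G)\bigr)\wedge U = 0_X$.

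The first step is to turn this failure into a fuzzy weakly-open cover of $U$. Because every fuzzy closed set is fuzzy weakly-closed (the closure of a closed set equals itself, so the defining inequality is automatic), the complements $(cl(G))^c$ are fuzzy open and hence fuzzy weakly-open. The assumed equality forces, for each $x \in \mathrm{Supp}(U)$, $\inf_{G \in \Gamma} cl(G)(x) = 0$, which is exactly $\bigvee_{G \in \Gamma}(cl(G))^c(x) = 1 \geq U(x)$. So $\{(cl(G))^c : G \in \Gamma\}$ is a family of fuzzy weakly-open sets covering $U$ on its support.

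Applying fuzzy weakly-compactness of $U$ relative to $X$ then yields a finite subfamily $\{(cl(G_i))^c : i=1,\dots,n\}$ with $\bigl(\bigvee_{i=1}^{n}(cl(G_i))^c\bigr)(x) \geq U(x)$ for all $x \in \mathrm{Supp}(U)$. Rewriting, $\bigwedge_{i=1}^{n} cl(G_i)(x) \leq 1 - U(x)$ on $\mathrm{Supp}(U)$, and the inequality is trivial outside $\mathrm{Supp}(U)$ since $U$ vanishes there. Hence $\bigwedge_{i=1}^{n} cl(G_i)$ is not quasi-coincident with $U$, and since $G_i \leq cl(G_i)$ the same holds for $\bigwedge_{i=1}^{n} G_i$. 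This contradicts the hypothesis that every finite subcollection of $\Gamma$ is quasi-coincident with $U$, and the theorem follows.

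The step I expect to be the main pitfall is the transition from the pointwise identity $\bigl(\bigwedge cl(G)\bigr) \wedge U = 0_X$ to the covering statement $\bigvee (cl(G))^c(x) \geq U(x)$ on $\mathrm{Supp}(U)$; this is where one must be careful that the infimum of $cl(G)(x)$ is actually forced to $0$ (not just $\le U(x)$) and that the subsequent finite extraction respects the "on $\mathrm{Supp}(U)$" convention built into the definition of fuzzy weakly-compact relative to $X$. Everything else is routine complementation and the use of $G \leq cl(G)$ to pass from the closures back to $\Gamma$ itself.
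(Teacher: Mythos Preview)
Your proposal is correct and follows essentially the same route as the paper: assume the quasi-coincidence hypothesis and $(\bigwedge_{G\in\Gamma} clG)\wedge U=0_X$, pass to the fuzzy weakly-open cover $\{(clG)^c\}$ of $U$, extract a finite subcover by weak compactness, and obtain $\bigwedge_{i=1}^n cl(G_i)\ \overline{q}\ U$ for the contradiction. Your argument is in fact slightly more careful than the paper's at two points: you explicitly justify that $(\bigwedge clG)(x)=0$ on $\mathrm{Supp}(U)$ (rather than merely $\leq U(x)$), and you add the step $G_i\leq cl(G_i)$ to pass the non-quasi-coincidence from $\bigwedge cl(G_i)$ back to $\bigwedge G_i$, which is what the hypothesis actually speaks about.
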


\begin{proof}
Let $U$ be fuzzy weakly-compact relative to $X$ and $\exists$ a filterbase $\Gamma$ of fuzzy weakly-closed sets in $X$ such that every finite subcollection of $\Gamma$ is quasi coincident with $U$ and $(\underset{G \in \Gamma}{\bigwedge}cl G)~\bigwedge U = 0_X \Rightarrow$ for every $x \in Supp(U),~ (\underset{G \in \Gamma}{\bigwedge}cl G)(x) = 0 \Rightarrow (\underset{G \in \Gamma}{\bigvee}(clG)^c)(x) = 1 \Rightarrow \{(clG)^c~|~ G \in \Gamma\}$ is a fuzzy weakly open covering of $U$. But $U$ is fuzzy weakly-compact relative to $X$, so $\exists$ a finite subfamily $\{(clG_i)^c~|~ i= 1, 2, . . . ,n\}$ such that for all $ x \in Supp(U),~(\underset{i=1}{\overset{n}{\bigvee}}(clG_i)^c)(x) \geq U(x) \Rightarrow (\underset{i=1}{\overset{n}{\bigwedge}}(clG_i))(x) \leq U^c(x) \Rightarrow (\underset{i=1}{\overset{n}{\bigwedge}}clG_i) ~{\overline q}~ U,$ a contradiction. Hence the result. 
\end{proof}

\begin{defn}
An FTS $X$ is said to be fuzzy weakly-closed iff for every family $\{G_\lambda~|~ \lambda \in \Lambda\}$ of fuzzy weakly-open sets with $\underset{\lambda \in \Lambda}{\bigvee} G_\lambda(x)=1,$ there exists a finite subfamily $\{G_{\lambda_i}~|~ i = 1, 2, . . . ,n\}$ such that $(\underset{i=1}{\overset{n}{\bigvee}}wcl G_{\lambda_i})(x)=1$ for every $x \in X$.
\end{defn}

\begin{defn}
A fuzzy set $U$ in an FTS $X$ is said to be fuzzy weakly-closed relative to $X$ iff for every family $\{G_\lambda~|~ \lambda \in \Lambda\}$ of fuzzy weakly-open sets with $\underset{\lambda \in \Lambda}{\bigvee} G_\lambda(x)=U(x)$, for all $x \in Supp(U)$, there exists a finite subfamily 
$\{G_{\lambda_i}~|~ i = 1, 2, . . . ,n\}$ such that $(\underset{i=1}{\overset{n}{\bigvee}}wcl G_{\lambda_i})(x)=U(x)$, for every $x \in Supp(U)$.
\end{defn}

\begin{rem} Every fuzzy weakly-compact space is fuzzy weakly-closed but not conversely.
\end{rem}

\begin{thm}
An FTS $X$ is fuzzy weakly-closed if for every fuzzy filterbase $\Gamma$ in $X, (\underset{A \in \Gamma}{\bigwedge}wclA)\neq 0_X$.
\end{thm}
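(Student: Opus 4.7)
The plan is to mirror the contrapositive strategy used in the preceding \emph{fuzzy weakly-compact} theorem. Suppose, for contradiction, that $X$ is not fuzzy weakly-closed. Unpacking the definition, there exists a family $\{A_\lambda ~|~ \lambda \in \Lambda\}$ of fuzzy weakly-open sets with $(\bigvee_{\lambda \in \Lambda} A_\lambda)(x) = 1$ for every $x \in X$, yet for every finite subfamily $\{A_{\lambda_1},\dots,A_{\lambda_n}\}$ there exists some $x_0 \in X$ with $(\bigvee_{i=1}^{n} wcl(A_{\lambda_i}))(x_0) < 1$. Consequently, at that point $(wcl(A_{\lambda_i}))^c(x_0) > 0$ for every $i$, so $\bigwedge_{i=1}^{n} (wcl(A_{\lambda_i}))^c \neq 0_X$.

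This lets me produce a candidate filterbase. Define $\Gamma = \{(wcl(A_\lambda))^c ~|~ \lambda \in \Lambda\}$. By the previous paragraph, every finite meet of members of $\Gamma$ is nonzero, so $\Gamma$ is a fuzzy filterbase in $X$. By the hypothesis of the theorem, this forces $\bigwedge_{B \in \Gamma} wcl(B) \neq 0_X$, and I now aim to contradict exactly this.

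The main step is a monotonicity/idempotence computation. Since $A_\lambda \leq wcl(A_\lambda)$, taking complements gives $(wcl(A_\lambda))^c \leq A_\lambda^c$. Because $A_\lambda$ is fuzzy weakly-open, its complement $A_\lambda^c$ is fuzzy weakly-closed, and by the observation recorded after the definition of $wcl$ we have $wcl(A_\lambda^c) = A_\lambda^c$. Monotonicity of $wcl$ (immediate from its defining intersection) therefore yields $wcl\bigl((wcl(A_\lambda))^c\bigr) \leq wcl(A_\lambda^c) = A_\lambda^c$ for every $\lambda$. Taking the infimum over $\lambda$,
\[
\bigwedge_{B \in \Gamma} wcl(B) \;\leq\; \bigwedge_{\lambda \in \Lambda} A_\lambda^c \;=\; \Bigl(\bigvee_{\lambda \in \Lambda} A_\lambda\Bigr)^c \;=\; 1_X^{\,c} \;=\; 0_X,
\]
contradicting the hypothesis $\bigwedge_{B \in \Gamma} wcl(B) \neq 0_X$. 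Hence $X$ must be fuzzy weakly-closed.

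The only real subtlety is that $wcl$ is not guaranteed to be idempotent on arbitrary sets (the excerpt warns that $wcl(A) = A$ does not force $A$ to be weakly-closed), so I cannot blindly simplify $wcl\bigl((wcl(A_\lambda))^c\bigr)$. The clean way around this, carried out above, is to bound it from above by $wcl(A_\lambda^c)$ using monotonicity and then use the fact that $A_\lambda^c$ itself is weakly-closed; this is the pivot of the argument.
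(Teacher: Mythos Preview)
Your proof is correct and follows essentially the same route as the paper: both assume $X$ is not fuzzy weakly-closed, extract a weakly-open cover with no finite ``$wcl$-subcover,'' form the filterbase $\Gamma=\{(wcl\,A_\lambda)^c\}$, and then derive $\bigwedge_{B\in\Gamma} wcl(B)=0_X$ from the fact that $\{A_\lambda\}$ covers $X$. Your write-up actually improves on the paper's at the crucial step: where the paper simply asserts $\bigwedge_\lambda G_\lambda^c=0 \Rightarrow \bigwedge_\lambda wcl\bigl((wcl\,G_\lambda)^c\bigr)=0$, you supply the missing chain $wcl\bigl((wcl\,A_\lambda)^c\bigr)\leq wcl(A_\lambda^c)=A_\lambda^c$ via monotonicity of $wcl$ and the fact that $A_\lambda^c$ is weakly-closed.
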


\begin{proof}
Let every fuzzy filterbase $\Gamma$ in $X$ be such that $(\underset{A \in \Gamma}{\bigwedge}wclA)\neq 0_X$. If possible, assume $\{G_\lambda~|~ \lambda \in \Lambda\}$ be a fuzzy weakly-open cover of $X$ and let for every finite subfamily $\{G_{\lambda_i}~|~ i = 1, 2, . . . ,n\}, (\underset{i=1}{\overset{n}{\bigvee}}wclG_{\lambda_i})(x) < 1$ for some $x \in X \Rightarrow (\underset{i=1}{\overset{n}{\bigwedge}}wclG_{\lambda_i}^c)(x)> 0$, for some $x \in X \Rightarrow  \{{(wclG_\lambda)}^c~|~ \lambda \in \Lambda\}$ forms a fuzzy filterbase in $X$. We have $\underset{\lambda \in \Lambda}{\bigwedge} G_\lambda^c(x) = 0 \Rightarrow \underset{\lambda \in \Lambda}{\bigwedge}wcl ({wclG_\lambda)}^c(x) = 0$, a contradiction. Hence $X$ is fuzzy weakly-closed.
\end{proof}

\begin{thm}
 If an FTS $X$ is fuzzy weakly-closed then for every fuzzy weakly-open filterbase $\Gamma$ in $X, (\underset{G \in \Gamma}{\bigwedge}cl(G))\neq 0_X$.
\end{thm}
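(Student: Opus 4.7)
The plan is a proof by contradiction, leveraging the correspondence between covers by $(cl(G))^{c}$ and the hypothesis on $\Gamma$. Suppose $X$ is fuzzy weakly-closed and yet there is a fuzzy weakly-open filterbase $\Gamma$ in $X$ with $\bigwedge_{G \in \Gamma} cl(G) = 0_X$. Taking complements gives $\bigvee_{G \in \Gamma}(cl(G))^{c} = 1_X$. Each $(cl(G))^{c} = int(G^{c})$ is fuzzy open, and since the defining condition of fuzzy weakly-closed set is satisfied trivially by any fuzzy closed set (if $A \leq U$ and $A$ is fuzzy closed, then $cl(A) = A \leq U$), every fuzzy open set is fuzzy weakly-open. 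Hence $\{(cl(G))^{c} : G \in \Gamma\}$ is a fuzzy weakly-open family whose join is $1_X$, so the hypothesis that $X$ is fuzzy weakly-closed applies.

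By that hypothesis there is a finite subcollection $\{(cl(G_i))^{c} : i = 1,\dots,n\}$ with $\bigvee_{i=1}^{n} wcl((cl(G_i))^{c}) = 1_X$. The decisive step is to collapse these weak closures using the hypothesis on $\Gamma$: each $G_i^{c}$ is fuzzy weakly-closed because $G_i$ is fuzzy weakly-open, so the remark following the definition of $wcl$ gives $wcl(G_i^{c}) = G_i^{c}$. Since $G_i \leq cl(G_i)$ yields $(cl(G_i))^{c} \leq G_i^{c}$, and $wcl$ is monotone (immediate from its definition as an intersection over weakly-closed supersets), we obtain $wcl((cl(G_i))^{c}) \leq G_i^{c}$. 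Joining over $i$ then produces
\[
1_X \;=\; \bigvee_{i=1}^{n} wcl((cl(G_i))^{c}) \;\leq\; \bigvee_{i=1}^{n} G_i^{c} \;=\; \Bigl(\bigwedge_{i=1}^{n} G_i\Bigr)^{c},
\]
which forces $\bigwedge_{i=1}^{n} G_i = 0_X$, contradicting the filterbase property of $\Gamma$.

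The main obstacle is spotting where the weakly-open hypothesis on $\Gamma$ actually enters: it is not needed to manufacture the cover (any family with $\bigwedge cl(G) = 0_X$ would do) but is indispensable in the final inequality $wcl((cl(G_i))^{c}) \leq G_i^{c}$, which is what converts the conclusion of fuzzy weakly-closedness back into a statement about the $G_i$ themselves. Without weak-openness of $\Gamma$ the weak closures of $(cl(G_i))^{c}$ could in principle exceed $G_i^{c}$, and the clash with the finite intersection property that drives the contradiction would not materialise.
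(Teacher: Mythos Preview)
Your proof is correct and follows essentially the same route as the paper: assume $\bigwedge_{G\in\Gamma} cl(G)=0_X$, pass to the fuzzy weakly-open cover $\{(cl(G))^c\}$, extract a finite subfamily with $\bigvee_i wcl((cl(G_i))^c)=1_X$, and derive $\bigwedge_i G_i=0_X$ to contradict the filterbase condition. You supply more detail than the paper at the key step $wcl((cl(G_i))^c)\leq G_i^{c}$ (the paper simply asserts the resulting implication), and your closing remark pinpointing where the weakly-open hypothesis on $\Gamma$ is actually consumed is a worthwhile addition.
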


\begin{proof}
Let $X$ be a fuzzy weakly-closed space, assume there exists a fuzzy weakly-open filterbase $\Gamma$ in $X$ such that $\underset{G \in \Gamma}{\bigwedge}cl(G)=0_X \Rightarrow \underset{G \in \Gamma}{\bigvee}(clG)^c=1_X \Rightarrow  \{(clG_\lambda)^c~|~\lambda \in \Lambda, ~G_\lambda \in \Gamma\}$ is a fuzzy weakly-open covering of $X$. Then by the definition of fuzzy weakly-closed space, it has a finite subfamily $\{({clG_{\lambda_i})}^c~|~i = 1, 2, . . . , n\}$ such that $\underset{i =1}{\overset{n}{\bigvee}}wcl({clG_{\lambda_i})}^c=1_X 
\Rightarrow \underset{i =1}{\overset{n}{\bigwedge}}({wcl({clG_{\lambda_i})}^c)}^c=0_X \Rightarrow \underset{i =1}{\overset{n}{\bigwedge}} G_{\lambda_i} =0_X$, a contradiction since all $G$'s are members of a filterbase. Hence $\underset{G \in \Gamma}{\bigwedge}cl(G) \neq 0_X$. 
\end{proof}

\begin{thm}
If a fuzzy subset $U$ in an FTS $X$ is fuzzy weakly-closed relative to $X$ then for every fuzzy weakly-open filterbase $\{G_\lambda ~|~ \lambda \in \Lambda\}$ in $X$ with $(\underset{\lambda \in \Lambda}{\bigwedge} cl(G_\lambda))\wedge U \neq 0_X$, there exists a finite subfamily $\{G_{\lambda_i}~|~ i = 1, 2, . . ., n\}$ such that $(\underset{i =1}{\overset{n}{\bigwedge}}G_{\lambda_i})~q~U$. 
\end{thm}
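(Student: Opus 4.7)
The plan is to argue by contradiction, mirroring the structure of the preceding theorem. Assume $U$ is fuzzy weakly-closed relative to $X$, $\{G_\lambda\}_{\lambda\in\Lambda}$ is a fuzzy weakly-open filterbase with $\bigl(\bigwedge_\lambda cl(G_\lambda)\bigr)\wedge U\neq 0_X$, and suppose toward a contradiction that for every finite subfamily $\{G_{\lambda_i}\}_{i=1}^n$, the meet $\bigwedge_{i=1}^n G_{\lambda_i}$ is not quasi-coincident with $U$. Unpacking $\overline{q}$ gives the pointwise bound $\bigl(\bigwedge_i G_{\lambda_i}\bigr)(x)+U(x)\leq 1$ for every $x\in X$, equivalently $U\leq \bigvee_i G_{\lambda_i}^c$ for every finite subfamily; in particular $G_\lambda\leq U^c$ for each $\lambda$.

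Following the construction used in the preceding theorem, I would take $\{(cl(G_\lambda))^c:\lambda\in\Lambda\}$ as the candidate fuzzy weakly-open cover of $U$; each member is fuzzy open and therefore fuzzy weakly-open (since every fuzzy closed set is fuzzy weakly-closed). Invoking the weakly-closed-relative property of $U$ against this cover would then produce a finite subfamily $\{(cl(G_{\lambda_i}))^c\}_{i=1}^n$ with $\bigvee_{i=1}^n wcl\bigl((cl(G_{\lambda_i}))^c\bigr)(x)\geq U(x)$ on $\mathrm{Supp}(U)$. Using $(cl(G_{\lambda_i}))^c\leq G_{\lambda_i}^c$ together with the weak-closedness of $G_{\lambda_i}^c$ (so $wcl((cl(G_{\lambda_i}))^c)\leq G_{\lambda_i}^c$) and taking pointwise complements, I would pass, exactly as in the closing manipulation of the preceding proof, from the finite meet of $G_{\lambda_i}$'s to a bound on the meet of the $cl(G_{\lambda_i})$'s against $U$, yielding $\bigl(\bigwedge_{i=1}^n cl(G_{\lambda_i})\bigr)\wedge U=0_X$ on $\mathrm{Supp}(U)$. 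Since the infinite meet is bounded above by every finite meet, this contradicts the hypothesis $\bigl(\bigwedge_\lambda cl(G_\lambda)\bigr)\wedge U\neq 0_X$.

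The main obstacle, I expect, is twofold. First, the covering step: because $\bigl(\bigwedge_\lambda cl(G_\lambda)\bigr)\wedge U\neq 0_X$ furnishes a point $x^\ast\in\mathrm{Supp}(U)$ at which $\bigvee_\lambda(cl(G_\lambda))^c(x^\ast)<1$, the family $\{(cl(G_\lambda))^c\}$ does not visibly satisfy the covering condition of the weakly-closed-relative definition at $x^\ast$; I would handle this following the loose convention the paper uses in the analogous Theorems~4.5 and the immediately preceding result, leveraging the finite-subfamily inequality $U\leq \bigvee_i G_{\lambda_i}^c$ derived from the contradiction hypothesis, and, if needed, augmenting the family to patch the cover pointwise on $\mathrm{Supp}(U)$. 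Second, the closing dualization naturally produces control of $\bigwedge_i G_{\lambda_i}$ rather than $\bigwedge_i cl(G_{\lambda_i})$, so the delicate move is to keep the closure information intact across the complementation — either by iterating the inequality $wcl(A)\leq cl(A)$ or by choosing the cover so that $wcl((cl(G_{\lambda_i}))^c)^c$ directly dominates $cl(G_{\lambda_i})$ on $\mathrm{Supp}(U)$.
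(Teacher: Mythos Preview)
Your two ``obstacles'' are genuine, and they arise because you have set up the contradiction in the opposite direction from the paper. You assume $\bigl(\bigwedge_\lambda cl(G_\lambda)\bigr)\wedge U\neq 0_X$ together with $\neg\,q$ for every finite meet, and then try to make $\{(cl G_\lambda)^c\}$ cover $U$. As you yourself observe, the hypothesis $\neq 0_X$ produces a point $x^\ast\in\mathrm{Supp}(U)$ at which $\bigvee_\lambda (cl G_\lambda)^c(x^\ast)<1$, so this family simply does \emph{not} cover $U$, and no amount of ``augmenting'' will repair this without abandoning the very family you need for the closing step.

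The paper's proof swaps the roles of the two conditions in the contradiction. It assumes that \emph{every} finite meet is quasi-coincident with $U$ and that $\bigl(\bigwedge_\lambda cl(G_\lambda)\bigr)\wedge U = 0_X$. The equality $=0_X$ gives $\bigvee_\lambda (cl G_\lambda)^c(x)=1$ for all $x\in\mathrm{Supp}(U)$, so $\{(cl G_\lambda)^c\}$ now \emph{does} cover $U$; weakly-closed-relative then yields a finite subfamily with $\bigvee_{i=1}^n wcl\bigl((cl G_{\lambda_i})^c\bigr)\geq U$, hence $\bigwedge_{i=1}^n \bigl(wcl((cl G_{\lambda_i})^c)\bigr)^c\leq U^c$. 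Since each $G_{\lambda_i}$ is weakly-open and $G_{\lambda_i}\leq cl(G_{\lambda_i})$, one has $G_{\lambda_i}\leq \bigl(wcl((cl G_{\lambda_i})^c)\bigr)^c$, whence $\bigwedge_{i=1}^n G_{\lambda_i}\leq U^c$, i.e.\ $\bigwedge_{i=1}^n G_{\lambda_i}\ \overline{q}\ U$, contradicting the assumed quasi-coincidence of every finite meet. In effect the paper is proving the implication ``(every finite meet is $q$ with $U$) $\Rightarrow$ $\bigl(\bigwedge_\lambda cl(G_\lambda)\bigr)\wedge U\neq 0_X$'', in line with the pattern of Theorems~4.9 and~4.14; the covering step works precisely because $=0_X$ (not $\neq 0_X$) is taken as the contradiction hypothesis. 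Your second worry --- keeping the closure through the dualization --- also disappears in this arrangement: the final inequality lands on $\bigwedge_i G_{\lambda_i}$, and that is exactly what is needed to contradict quasi-coincidence.
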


\begin{proof}
Let $U$ be fuzzy weakly-closed relative to $X$ and $\{G_\lambda ~|~ \lambda \in \Lambda\}$ be a fuzzy weakly-open filterbase in $X$, such that for every finite subfamily $\{G_{\lambda_i}~|~ i = 1, 2, . . ., n\}$ of $\{G_\lambda ~|~ \lambda \in \Lambda\}$, we have $(\underset{i=1}{\overset{n}{\bigwedge}}G_{\lambda_i})~q~U$ but $(\underset{\lambda \in \Lambda}{\bigwedge} cl(G_\lambda))\wedge U = 0_X \Rightarrow (\underset{\lambda \in \Lambda}{\bigwedge}cl(G_\lambda))(x) = 0$ for every 
$x \in ~Supp(U)\Rightarrow \underset{\lambda \in \Lambda}{\bigvee} {(clG_\lambda)}^c(x) = 1$ for every $x \in ~Supp(U)
 \Rightarrow \{({clG_\lambda)}^c~|~ \lambda \in \Lambda\}$ forms a fuzzy weakly-open cover of $U$ and hence there is a
 finite subfamily $\{cl(G_{\lambda_i})^c ~|~ i = 1,2, . ., n\}$ such that $\underset{i =1}{\overset{n}{\bigvee}} wcl{(clG_{\lambda_i})}^c
\geq U \Rightarrow \underset{i =1}{\overset{n}{\bigwedge}}({wcl {(clG_{\lambda_i})}^c)}^c \leq U^c \Rightarrow \underset{i =1}{\overset{n}{\bigwedge}}G_{\lambda_i} 
\leq U^c \Rightarrow \underset{i =1}{\overset{n}{\bigwedge}}G_{\lambda_i}~\overline{q}~ U$, a contradiction.
\end{proof}

\begin{thm}
A fuzzy subset $U$ in an FTS $X$ is fuzzy weakly-closed relative to $X$ if for every fuzzy filterbase $\{G_\lambda ~|~ \lambda \in \Lambda\}$ in $X$ with $(\underset{\lambda \in \Lambda}{\bigwedge} wclG_\lambda)\wedge U \neq 0_X$, there exists a finite subfamily $\{G_{\lambda_i}~|~ i = 1, 2, . . ., n\}$ such that $(\underset{i =1}{\overset{n}{\bigwedge}}G_{\lambda_i})~q~U$. 
\end{thm}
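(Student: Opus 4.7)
The argument will proceed by contradiction, in the style of Theorem~4.6. Assume $U$ is not fuzzy weakly-closed relative to $X$. Negating Definition~4.9 gives a family $\{A_\lambda : \lambda \in \Lambda\}$ of fuzzy weakly-open sets with $(\bigvee_{\lambda \in \Lambda} A_\lambda)(x) = U(x)$ for every $x \in \mathrm{Supp}(U)$, such that for each finite subcollection $\{A_{\lambda_i}\}_{i=1}^n$ there is a point $x \in \mathrm{Supp}(U)$ with $(\bigvee_{i=1}^n wcl(A_{\lambda_i}))(x) < U(x)$.

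From this family I would form the candidate $\Gamma = \{A_\lambda^c : \lambda \in \Lambda\}$, a collection of fuzzy weakly-closed sets. The filterbase property is verified at the witness point above, since $(\bigwedge_{i=1}^n A_{\lambda_i}^c)(x) = 1 - (\bigvee_{i=1}^n A_{\lambda_i})(x) \geq 1 - (\bigvee_{i=1}^n wcl(A_{\lambda_i}))(x) > 1 - U(x) \geq 0$. Since $wcl(A_\lambda^c) = A_\lambda^c$, the premise of the hypothesis becomes $(\bigwedge_{\lambda \in \Lambda} wcl(A_\lambda^c)) \wedge U = (\bigvee_{\lambda \in \Lambda} A_\lambda)^c \wedge U$, which on $\mathrm{Supp}(U)$ equals $U^c \wedge U$ and is nonzero at any point where $U$ takes a value in $(0,1)$---the generic fuzzy situation.

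Applying the hypothesis to $\Gamma$ then yields a finite subfamily $\{A_{\lambda_i}^c\}_{i=1}^n$ whose meet is quasi coincident with $U$; dualising via complementation this translates into a covering-type bound $\bigvee_{i=1}^n A_{\lambda_i} \geq U$ at a suitable witness point $y$, and since $A_{\lambda_i} \leq wcl(A_{\lambda_i})$ one immediately obtains $(\bigvee_{i=1}^n wcl(A_{\lambda_i}))(y) \geq U(y)$ there, in direct conflict with the assertion of the first paragraph applied to this very finite subcollection. The principal obstacle is bridging the pointwise quasi-coincidence supplied by the hypothesis with the uniform cover failure from Step~1; resolving this requires matching the finite subfamily extracted from the hypothesis with the witness point furnished for that subfamily in Step~1, a coordination that is natural because both sides of the argument are indexed by the very same finite subsets of $\Lambda$, and because the identity $wcl(A_\lambda^c) = A_\lambda^c$ allows the $wcl$ appearing in the hypothesis to be transported onto the $A_\lambda$ side of the dichotomy.
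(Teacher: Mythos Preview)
Your argument has two concrete errors that prevent it from closing.

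\textbf{The dualisation of quasi-coincidence is backwards.} By definition, $(\bigwedge_{i=1}^n A_{\lambda_i}^c)\,q\,U$ means there is a point $y$ with $(\bigwedge_{i=1}^n A_{\lambda_i}^c)(y)+U(y)>1$, i.e.\ $1-(\bigvee_{i=1}^n A_{\lambda_i})(y)>1-U(y)$, which yields $(\bigvee_{i=1}^n A_{\lambda_i})(y)<U(y)$ --- the \emph{opposite} inequality to the one you claim. This is not in conflict with your first paragraph; it is exactly the failure condition you started from. So applying the hypothesis to $\Gamma=\{A_\lambda^c\}$ produces no contradiction at all. Your remark about ``matching'' the witness point with the finite subfamily does not help, because even when the indices match, the two inequalities point the same way.

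\textbf{The premise of the hypothesis need not hold for your filterbase.} You correctly compute $(\bigwedge_\lambda wcl(A_\lambda^c))\wedge U = U^c\wedge U$ on $\mathrm{Supp}(U)$, but then rely on $U$ taking a value in $(0,1)$ to make this nonzero. Nothing in the theorem excludes crisp $U$, so this step is a genuine case omission, not a ``generic'' simplification.

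The paper sidesteps both problems by taking the filterbase $\{(wcl\,A_\lambda)^c:\lambda\in\Lambda\}$ instead of $\{A_\lambda^c\}$. With that choice the failure of weakly-closedness relative to $X$ says precisely that $\bigvee_{i=1}^n wcl\,A_{\lambda_i}\not\geq U$ for every finite subfamily, which is exactly $\bigwedge_{i=1}^n(wcl\,A_{\lambda_i})^c\,q\,U$; thus every finite meet of the new filterbase is quasi-coincident with $U$, and the hypothesis can be exploited (in the paper's reading) to force $(\bigwedge_\lambda wcl((wcl\,A_\lambda)^c))\wedge U\neq 0_X$ and hence a point where $\bigvee_\lambda A_\lambda<1$, contradicting the cover. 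The key structural difference is that $(wcl\,A_\lambda)^c$, not $A_\lambda^c$, is the object directly tied to the negated definition of weakly-closed relative to $X$.
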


\begin{proof}
Suppose $U$ satisfies the hypothesis and if possible, let $U$ be not a fuzzy weakly-closed set relative to $X$, then there exists a collection  $\{G_\lambda ~|~ \lambda \in \Lambda\}$ of fuzzy weakly-open sets that covers $U$ such that for every finite subfamily $\{G_{\lambda_i}~|~ i = 1, 2, . . ., n\}$ we have $(\underset{i=1}{\overset{n}{\bigvee}} wclG_{\lambda_i})(x) \leq U(x)$ for some $x \in Supp(U)$ and hence $\underset{i=1}{\overset{n}{\bigwedge}}(wclG_{\lambda_i})^c(x) \geq {U^c(x)} \geq 0$ for some $x \in Supp(U)\Rightarrow \{{(wclG_{\lambda})}^c~|~ \lambda \in \Lambda\}$ forms a fuzzy filterbase in $X$. 

Now we claim that, $\underset{i =1}{\overset{n}{\bigwedge}} {(wclG_{\lambda_i})}^c~ q ~U $ for every finite subfamily 
of $\{G_\lambda ~|~ \lambda \in \Lambda\}$, for otherwise $\underset{i =1}{\overset{n}{\bigwedge}} {(wclG_{\lambda_i})}^c ~ \overline{q}~U \Rightarrow U \leq \underset{i =1}
{\overset{n}{\bigvee}}wclG_{\lambda_i}$, a contradiction.

So, $ \underset{i =1}{\overset{n}{\bigwedge}} 
wcl{(wclG_{\lambda_i})}^c~ \bigwedge~U \neq 0_X \Rightarrow \exists $ at least one $x \in Supp(U)$ such that $\underset{i=1}{\overset{n}{\bigwedge}}wcl{(wclG_{\lambda_i})}^c~ > 0_X \Rightarrow \underset{i=1}{\overset{n}{\bigvee}} {(wcl{(wclG_{\lambda_i})}^c)}^c  < 1_X$ and hence $\underset{i=1}{\overset{n}{\bigvee}}G_{\lambda_i} < 1_X$, a contradiction.
\end{proof}

\section{Conclusion}

In this paper, we have introduced fuzzy weakly closed set in an FTS and studied some of its set theoretic properties. We have also examined inter relationship of fuzzy weakly-closed sets and other generalizations of closed sets in FTS. Further, fuzzy compactness and fuzzy closed space are 
 studied through fuzzy weakly closed sets. \\

{\bf References}

\end{document}